\begin{document}

\title[Invariant tensors related with natural connections]
{Invariant tensors related with natural connections for a class
Riemannian product manifolds}

\author[D. Gribacheva]{Dobrinka Gribacheva}


\frenchspacing

\newcommand{\ie}{i.e. }
\newcommand{\X}{\mathfrak{X}}
\newcommand{\W}{\mathcal{W}}
\newcommand{\F}{\mathcal{F}}
\newcommand{\T}{\mathcal{T}}
\newcommand{\LL}{\mathcal{L}}
\newcommand{\TT}{\mathfrak{T}}
\newcommand{\M}{(M,\f,\xi,\eta,g)}
\newcommand{\Lf}{(G,\f,\xi,\eta,g)}
\newcommand{\R}{\mathbb{R}}
\newcommand{\s}{\mathfrak{S}}
\newcommand{\n}{\nabla}
\newcommand{\tr}{{\rm tr}}
\newcommand{\Div}{{\rm div}}
\newcommand{\nn}{\tilde{\nabla}}
\newcommand{\tg}{\tilde{g}}
\newcommand{\f}{\varphi}
\newcommand{\D}{{\rm d}}
\newcommand{\id}{{\rm id}}
\newcommand{\al}{\alpha}
\newcommand{\bt}{\beta}
\newcommand{\gm}{\gamma}
\newcommand{\dt}{\delta}
\newcommand{\lm}{\lambda}
\newcommand{\ta}{\theta}
\newcommand{\om}{\omega}
\newcommand{\Om}{\Omega}
\newcommand{\ep}{\varepsilon}
\newcommand{\ea}{\varepsilon_\alpha}
\newcommand{\eb}{\varepsilon_\beta}
\newcommand{\eg}{\varepsilon_\gamma}
\newcommand{\sx}{\mathop{\mathfrak{S}}\limits_{x,y,z}}
\newcommand{\norm}[1]{\left\Vert#1\right\Vert ^2}
\newcommand{\nf}{\norm{\n \f}}
\newcommand{\Span}{\mathrm{span}}
\newcommand{\grad}{\mathrm{grad}}
\newcommand{\thmref}[1]{The\-o\-rem~\ref{#1}}
\newcommand{\propref}[1]{Pro\-po\-si\-ti\-on~\ref{#1}}
\newcommand{\secref}[1]{\S\ref{#1}}
\newcommand{\lemref}[1]{Lem\-ma~\ref{#1}}
\newcommand{\dfnref}[1]{De\-fi\-ni\-ti\-on~\ref{#1}}
\newcommand{\corref}[1]{Corollary~\ref{#1}}



\numberwithin{equation}{section}
\newtheorem{thm}{Theorem}[section]
\newtheorem{lem}[thm]{Lemma}
\newtheorem{prop}[thm]{Proposition}
\newtheorem{cor}[thm]{Corollary}

\theoremstyle{definition}
\newtheorem{defn}{Definition}[section]

\hyphenation{Her-mi-ti-an ma-ni-fold ah-ler-ian}




\begin{abstract}
Some invariant tensors in two Naveira classes of Rie\-mannian
product manifolds are considered. These tensors are related with
natural connections, i.e. linear connections preserving the
Rie\-mannian metric and the product structure.
\end{abstract}

\keywords{Riemannian almost product manifold; Riemannian metric;
 product structure; natural connection; curvature tensor; Riemannian P-tensor.}

\subjclass[2000]{53C15, 53C25.}

\maketitle

\begin{center}
\end{center}

\section*{Introduction}

A Riemannian almost product manifold $(M, P, g)$ is a
differentiable manifold $M$ for which almost product structure $P$
is compatible with the Riemannian metric $g$ such that an isometry
is induced in any tangent space of $M$.

The systematic development of the theory of Riemannian almost
product manifolds was started by K. Yano in \cite{1}.

In \cite{2} A. M. Naveira gave a classification of Riemannian
almost product manifolds with respect to the covariant
differentiation $\n P$, whe\-re $\n$  is the Levi-Civita
connection of $g$. This classification is very similar to the
Gray-Hervella classification in \cite{3} of almost Hermitian
manifolds.

M. Staikova and K. Gribachev gave in \cite{4} a classification of
the Riemannian almost product manifolds with $\tr P = 0$. In this
case the manifold $M$ is even-dimensional.

For the class $\W_1$ of the Staikova-Gribachev classification is
valid $\W_1=\overline{\W}_3\oplus\overline{\W}_6$, where
$\overline{\W}_3$ and $\overline{\W}_6$ are classes of the Naveira
classification. In some sense these manifolds have dual
geometries.

In \cite{5}, a connection $\n'$ on a Riemannian almost product
manifold $(M, P, g)$ is called natural if $\n'P=\n'g=0$. In
\cite{7}, a tensor on such a manifold is called a Riemannian
$P$-tensor if it has properties similar to the properties of the
K\"ahler tensor in Hermitian geometry. In \cite{14a}, a Riemannian
$P$-tensor $K$ is defined  on $(M, P,
g)\in\overline{\W}_3\cup\overline{\W}_6$ by the curvature tensor
$R$ of $\n$ and the structure $P$.

In the present work\footnote{Partially supported by project
NI11-FMI-004 of the Scientific Research Fund, Paisii Hilendarski
University of Plovdiv, Bulgaria}, we study manifolds $(M, P, g)$
from the class $\overline{\W}_3\cup\overline{\W}_6$ for which the
curvature tensor of each natural connection is a Riemannian
$P$-tensor.

We consider three tensors $B(L)$, $A(L)$ and $C(L)$ determined by
arbitrary Riemannian $P$-tensor $L$, where $B(L)$ is the Bochner
tensor introduced in \cite{4}. We prove that $B(R')=B(K)$ for
arbitrary natural connection $\n'$ in \thmref{thm-3.1}. In
\thmref{thm-4.1} we prove that $A(R')=A(K)$ if $\n'$ is the
canonical connection introduced in \cite{5}. In \thmref{thm-5.1}
we prove that $C(R')=C(K)$ if $\n'$ is a natural connection with
parallel torsion. Moreover, we consider a tensor $E(L)$ determined
by a curvature-like tensor $L$. In \thmref{thm-6.1} we prove that
$E(R')=E(R)$ for the natural connection  $\n'=D$, considered in
\cite{8}, in the case when $D$ has a parallel torsion.

\section{Preliminaries}

Let $(M,P,g)$ be a \emph{Riemannian almost product manifold}, \ie
a differentiable manifold $M$ with a tensor field $P$ of type
$(1,1)$ and a Riemannian metric $g$ such that $P^2x=x$,
$g(Px,Py)=g(x,y)$ for any $x$, $y$ of the algebra $\X(M)$ of the
smooth vector fields on $M$. Further $x,y,z,w$ will stand for
arbitrary elements of $\X(M)$ or vectors in the tangent space
$T_cM$ at $c\in M$.

In \cite{2} A.M.~Naveira gives a classification of Riemannian
almost pro\-duct manifolds with respect to the tensor $F$ of type
(0,3), defined by $F(x,y,z)=g\left(\left(\nabla_x
P\right)y,z\right), $ where $\n$ is the Levi-Civita connection of
$g$.

In this work we consider manifolds $(M, P, g)$ with $\tr{P}=0$. In
this case $M$ is an even-dimensional manifold. We assume that
$\dim{M}=2n$.

Using the Naveira classification, in \cite{4} M.~Staikova and
K.~Gribachev give a classification of Riemannian almost product
manifolds $(M,P,g)$ with $\tr P=0$. The basic classes of this
classification are $\W_1$, $\W_2$ and $\W_3$. Their intersection
is the class $\W_0$ of the \emph{Riemannian $P$-manifolds}
(\cite{6}), determined by the condition $F=0$. This class is an
analogue of the class of K\"ahler manifolds in the geometry of
almost Hermitian manifolds.

The class $\W_1$ from the Staikova-Gribachev classification
consists of the Riemannian product manifolds which are locally
conformal equivalent to Riemannian $P$-manifolds. This class plays
a similar role of the role of the class of the conformal K\"ahler
manifolds in almost Hermitian geometry. We will say that a
manifold from the class $\W_1$ is a \emph{$\W_1$-manifold}.

The characteristic condition for the class $\W_1$ is the following
\begin{equation*}
\begin{array}{l}
\W_1: F(x,y,z)=\frac{1}{2n}\big\{ g(x,y)\ta (z)-g(x,Py)\ta (Pz)
 \big.\\[4pt]
 \phantom{\W_1: F(x,y,z)=\frac{1}{2n}} +g(x,z)\ta (y)-g(x,Pz)\ta (Py)\big\},
\end{array}
\end{equation*}
where the associated 1-form $\ta$ is determined by $
\ta(x)=g^{ij}F(e_i,e_j,x). $ Here $g^{ij}$ will stand for the
components of the inverse matrix of $g$ with respect to a basis
$\{e_i\}$ of $T_cM$ at $c\in M$. The 1-form $\ta$ is
\emph{closed}, \ie $\D\ta=0$, if and only if
$\left(\n_x\ta\right)y=\left(\n_y\ta\right)x$. Moreover, $\ta\circ
P$ is a closed 1-form if and only if
$\left(\n_x\ta\right)Py=\left(\n_y\ta\right)Px$.

In \cite{4} it is proved that
$\W_1=\overline\W_3\oplus\overline\W_6$, where $\overline\W_3$ and
$\overline\W_6$ are the classes from the Naveira classification
determined by the following conditions:
\[
\begin{array}{rl}
\overline\W_3:& \quad F(A,B,\xi)=\frac{1}{n}g(A,B)\ta^v(\xi),\quad
F(\xi,\eta,A)=0,
\\[4pt]
\overline\W_6:& \quad
F(\xi,\eta,A)=\frac{1}{n}g(\xi,\eta)\ta^h(A),\quad F(A,B,\xi)=0,
\end{array}
\]
where $A,B,\xi,\eta\in\X(M)$, $PA=A$, $PB=B$, $P\xi=-\xi$,
$P\eta=-\eta$, $\ta^v(x)=\frac{1}{2}\left(\ta(x)-\ta(Px)\right)$,
$\ta^h(x)=\frac{1}{2}\left(\ta(x)+\ta(Px)\right)$. In the case
when $\tr P=0$, the above conditions for $\overline\W_3$ and
$\overline\W_6$ can be written for any $x,y,z$ in the following
form:
\begin{equation*}
\begin{array}{rl}
    \overline\W_3: \quad
    &F(x,y,z)=\frac{1}{2n}\bigl\{\left[g(x,y)+g(x,Py)\right]\ta(z)\\[4pt]
    &+\left[g(x,z)+g(x,Pz)\right]\ta(y)\bigr\},\quad
    \ta(Px)=-\ta(x),
\\[4pt]
    \overline\W_6: \quad
    &F(x,y,z)=\frac{1}{2n}\bigl\{\left[g(x,y)-g(x,Py)\right]\ta(z)\\[4pt]
    &+\left[g(x,z)-g(x,Pz)\right]\ta(y)\bigr\},\quad
    \ta(Px)=\ta(x).
\end{array}
\end{equation*}

In \cite{4}, a tensor $L$ of type (0,4) with pro\-per\-ties%
\begin{equation*}\label{2.4}
L(x,y,z,w)=-L(y,x,z,w)=-L(x,y,w,z),
\end{equation*}
\begin{equation*}\label{2.5}
L(x,y,z,w)+L(y,z,x,w)+L(z,x,y,w)=0
\end{equation*}
is called a \emph{curvature-like tensor}. Such a tensor on a
Riemannian almost product manifold $(M,P,g)$ with the property
\begin{equation*}
L(x,y,Pz,Pw)=L(x,y,z,w)
\end{equation*}
is called a \emph{Riemannian $P$-tensor} in \cite{7}. This notion
is an analogue of the notion of a K\"ahler tensor in Hermitian
geometry.

Let $S$ be a (0,2)-tensor on a Riemannian almost product manifold.
In \cite{4} it is proved that
\begin{equation}\label{1}
\begin{split}
\psi_1(S)(x,y,z,w)
&=g(y,z)S(x,w)-g(x,z)S(y,w)\\[4pt]
&+S(y,z)g(x,w)-S(x,z)g(y,w)
\end{split}
\end{equation}
is a curvature-like  tensor if and only if $S(x,y)=S(y,x)$, and
the tensor
\begin{equation}\label{2}
\psi_2(S)(x,y,z,w)=\psi_1(S)(x,y,Pz,Pw)
\end{equation}
is
curvature-like if and only if $S(x,Py)=S(y,Px)$. Obviously
\[
\psi_2(S)(x,y,Pz,Pw)=\psi_1(S)(x,y,z,w).
\]
If $\psi_1(S)$ and $\psi_2(S)$ are curvature-like tensors, then
$\left(\psi_1+\psi_2\right)(S)$ is a Riemannian $P$-tensor. The
tensors
\begin{equation}\label{3}
\pi_1=\frac{1}{2}\psi_1(g),\qquad
\pi_2=\frac{1}{2}\psi_2(g),\qquad
\pi_3=\psi_1(\widetilde{g})=\psi_2(\widetilde{g})
\end{equation}
are curvature-like, where $\widetilde{g}(x,y)=g(x,Py)$, and the
tensors $\pi_1+\pi_2$, $\pi_3$ are Riemannian $P$-tensors.

 The curvature tensor $R$ of $\n$ is determined by
$R(x,y)z=\nabla_x \nabla_y z - \nabla_y \nabla_x z -
    \nabla_{[x,y]}z$ and the corresponding tensor of type (0,4) is defined as
follows $R(x,y,z,w)=g(R(x,y)z,w)$. We denote the Ricci tensor and
the scalar curvature of $R$ by $\rho$ and $\tau$, respectively,
\ie $\rho(y,z)=g^{ij}R(e_i,y,z,e_j)$ and
$\tau=g^{ij}\rho(e_i,e_j)$. The associated Ricci tensor $\rho^*$
and the associated scalar curvature $\tau^*$ of $R$ are determined
by $\rho^*(y,z)=g^{ij}R(e_i,y,z,Pe_j)$ and
$\tau^*=g^{ij}\rho^*(e_i,e_j)$. In a similar way there are
determined the Ricci tensor $\rho(L)$ and the scalar curvature
$\tau(L)$ for any curvature-like tensor $L$ as well as the
associated quantities $\rho^*(L)$ and $\tau^*(L)$.

In \cite{5}, a linear connection $\n'$ on a Riemannian almost
product manifold $(M,P,g)$ is called a \emph{natural connection}
if $\n' P=\n' g=0$.

In \cite{9}, it is established that the natural connections $\n'$
on a $\W_1$-manifold $(M,P,g)$ form a 2-parametric family, where
the torsion $T$ of $\n'$  is determined by
\begin{equation}\label{4}
\begin{split}
    T(x,y,z)&=\frac{1}{2n}\left\{g(y,z)\ta(Px)-g(x,z)\ta(Py)\right\}\\[4pt]
            &\phantom{=\ }+\lm\left\{g(y,z)\ta(x)-g(x,z)\ta(y)\right.\\[4pt]
            &\phantom{=\ +\lm\left\{\right.}\left.
            +g(y,Pz)\ta(Px)-g(x,Pz)\ta(Py)\right\}\\[4pt]
            &\phantom{=\ }+\mu\left\{g(y,Pz)\ta(x)-g(x,Pz)\ta(y)\right.\\[4pt]
            &\phantom{=\ +\mu\left\{\right.}\left.
            +g(y,z)\ta(Px)-g(x,z)\ta(Py)\right\},
\end{split}
\end{equation}
where $\lm, \mu \in \R$.

Let $Q$ be the tensor determined by
\begin{equation}\label{5}
    \n'_xy=\n_xy+Q(x,y).
\end{equation}
The corresponding tensor of type (0,3), according to \cite{14},
satisfies
\begin{equation}\label{7}
    Q(x,y,z)=T(z,x,y).
\end{equation}

Let us recall the following statement.

\begin{thm}[\cite{14}]\label{thm-1.1}
Let $R'$ is the curvature tensor of a natural connection $\n'$ on
a $\W_1$-manifold $(M,P,g)$. Then the following relation is valid:
\begin{equation}\label{8}
    R=R'-g(p,p)\pi_1-g(q,q)\pi_2-g(p,q)\pi_3-\psi_1(S')-\psi_2(S''),
\end{equation}
where
\[
\begin{array}{l}
    p=\lm\Omega+\left(\mu+\frac{1}{2n}\right)P\Omega,\quad q=\lm
P\Omega+\mu\Omega,\quad g(\Omega,x)=\theta(x),
\end{array}
\]
\[
\begin{array}{rl}
    &S'(y,z)=\lm\left(\n'_y\ta\right)z+\left(\mu+\frac{1}{2n}\right)\left(\n'_y\ta\right)Pz
   \\[4pt]
   &\phantom{S'(y,z)=}-\frac{1}{2n}\left\{\lm\ta(y)\ta(Pz)+\mu\ta(y)\ta(z)\right\},
  \\[4pt]
    &S''(y,z)=\lm\left(\n'_y\ta\right)z+\mu\left(\n'_y\ta\right)Pz\\[4pt]
    &\phantom{S''(y,z)=}+\frac{1}{2n}\left\{\lm\ta(Py)\ta(z)+\mu\ta(Py)\ta(Pz)\right\}.
\end{array}
\]
\end{thm}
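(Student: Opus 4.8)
The plan is to start from the general relation between the curvature tensors of two connections that differ by a $(1,2)$-tensor. Writing $\n'_xy=\n_xy+Q(x,y)$ as in \eqref{5} and using that $\n$ is torsion-free, one has
\begin{equation*}
R'(x,y)z=R(x,y)z+(\n_xQ)(y,z)-(\n_yQ)(x,z)+Q(x,Q(y,z))-Q(y,Q(x,z)),
\end{equation*}
so, after lowering the last index with $g$, everything reduces to computing the three correction terms and collecting them. First I would put $Q$ in closed form: regrouping the right-hand side of \eqref{4} according to the factors $g(y,z)$ and $g(y,Pz)$, and using \eqref{7}, $\theta=g(\Omega,\cdot)$ and $\theta\circ P=g(P\Omega,\cdot)$, one gets
\begin{equation*}
Q(x,y)=g(x,y)p-g(p,x)y+g(x,Py)q-g(q,x)Py
\end{equation*}
with $p,q$ exactly as in the statement; this already accounts for the appearance of $p$ and $q$.

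For the derivative term I would use $\n g=0$, which leaves $(\n_xQ)(y,z)$ expressed only through $\n_xp$, $\n_xq$ and $(\n_xP)z$: here $g(\n_x\Omega,y)=(\n_x\theta)y$, $\n_xp=\lm\n_x\Omega+(\mu+\tfrac1{2n})\bigl((\n_xP)\Omega+P\n_x\Omega\bigr)$, and likewise for $\n_xq$, while every factor $(\n_xP)w$ is eliminated through the characteristic $\W_1$-identity applied to $F(x,w,\cdot)=g((\n_xP)w,\cdot)$. This generates two kinds of contributions: terms linear in $\n\theta$ and terms quadratic in $\theta$, the latter carrying the factor $\tfrac1{2n}$ inherited from $F$. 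Next I would compute the algebraic part $Q(x,Q(y,z))-Q(y,Q(x,z))$ by substituting the closed form of $Q$ into itself; using $P^2=\id$ together with $Pp=\lm P\Omega+(\mu+\tfrac1{2n})\Omega$ and $Pq=\lm\Omega+\mu P\Omega$, this reduces to terms in which the scalars $g(p,p)$, $g(q,q)$, $g(p,q)$ multiply $g$- and $\widetilde g$-quadratic expressions, plus additional $\theta\otimes\theta$ terms.

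The final step is the bookkeeping. Antisymmetrizing in $x$ and $y$ removes the parts of $(\n_xQ)(y,z)$ and of $Q\circ Q$ that are symmetric in the first two slots, and by the definitions \eqref{3} the scalar-weighted pieces reassemble into $g(p,p)\pi_1+g(q,q)\pi_2+g(p,q)\pi_3$. For what remains I would trade the Levi-Civita derivative of $\theta$ for the $\n'$-derivative via $(\n'_x\theta)y=(\n_x\theta)y-\theta(Q(x,y))$, which pushes the scalar $\theta(Q(x,y))$ back into the collection of quadratic terms; the whole package of ``derivative plus quadratic'' terms must then be split into the two patterns \eqref{1} and \eqref{2}, and one checks that the $\psi_1$-part has coefficient tensor precisely $S'$ and the $\psi_2$-part precisely $S''$, with the stated $\lm,\mu,\tfrac1{2n}$ weights. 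Solving the identity for $R$ yields \eqref{8}. I expect this regrouping to be the only real obstacle: the $\theta\otimes\theta$ terms enter from three sources --- the derivative of $P$ through $F$, the quadratic term $Q\circ Q$, and the substitution $\n\theta\to\n'\theta$ --- and one must verify that they combine into exactly the single correction $-\tfrac1{2n}\{\lm\theta(y)\theta(Pz)+\mu\theta(y)\theta(z)\}$ built into $S'$ and its analogue in $S''$; sorting the $\psi_1$ versus $\psi_2$ contributions correctly is where the bulk of the (routine but long) computation lies.
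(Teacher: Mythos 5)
This theorem is not proved in the paper at all: it is recalled verbatim from reference \cite{14} (``Let us recall the following statement''), so there is no in-paper argument to compare yours against. Judged on its own terms, your strategy is the standard and essentially inevitable one for such a formula: write $\n'=\n+Q$, use the torsion-free curvature-difference identity $R'(x,y)z=R(x,y)z+(\n_xQ)(y,z)-(\n_yQ)(x,z)+Q(x,Q(y,z))-Q(y,Q(x,z))$, and sort the correction terms. Your closed form $Q(x,y)=g(x,y)p-g(p,x)y+g(x,Py)q-g(q,x)Py$ does follow from \eqref{4} and \eqref{7} with exactly the $p$ and $q$ of the statement (I checked the coefficients), your identification of the three sources of $\ta\otimes\ta$ terms is correct, and the substitution $(\n'_x\ta)y=(\n_x\ta)y-\ta(Q(x,y))$ is the right device for converting Levi-Civita derivatives of $\ta$ into $\n'$-derivatives.

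The weakness is that the proposal stops exactly where the content of the theorem begins. For a statement of this kind the assertion \emph{is} the precise set of coefficients in $S'$ and $S''$ (in particular the asymmetric placement of the $\frac{1}{2n}$ shift on $\mu$ in $S'$ but not in $S''$, and the opposite signs of the quadratic $\ta\otimes\ta$ corrections); you describe where these terms come from but do not verify that they assemble as claimed, explicitly deferring ``the bulk of the (routine but long) computation.'' There is also a point you pass over: after antisymmetrization the derivative and quadratic remainders are not automatically of the shapes \eqref{1} and \eqref{2} for \emph{symmetric} choices of $S'$ and $S''$ unless one uses the $\W_1$-identity for $F$ and the specific algebraic structure of $Q$; showing that the residue really splits as $\psi_1(S')+\psi_2(S'')$ with no leftover term is a genuine step, not mere bookkeeping. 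As it stands the proposal is a correct and well-organized plan of attack rather than a proof.
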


\section{Some properties of the natural connections
on the manifolds of the class
$\overline{\W}_3\cup\overline{\W}_6$}

Let $(M,P,g)$ is a Riemannian product manifold of the class
$\overline\W_3$ or the class $\overline\W_6$, i.e.
$(M,P,g)\in\overline{\W}_3\cup\overline{\W}_6$. Then for the
1-form $\ta$ and the vector $\Om$ we have
\begin{equation}\label{9}
    \ta(Pz)=\ep\ta(z),\qquad P\Om=\ep\Om,
\end{equation}
where $\ep=1$ for $(M,P,g)\in\overline{\W}_3$ and $\ep=-1$ for
$(M,P,g)\in\overline{\W}_6$.

Let $\n'$ be a natural connection on
$(M,P,g)\in\overline{\W}_3\cup\overline{\W}_6$.

Using \eqref{4}, \eqref{7} and \eqref{9}, we obtain for the tensor
$Q$ determined by \eqref{5} the following
\[
\begin{split}
    Q(x,y)  &=\left[\lm+\ep\left(\mu+\frac{1}{2n}\right)\right]
            \left[g(x,y)-\ta(y)x\right]\\[4pt]
            &\phantom{=\ }\left(\mu+\ep\lm\right)
            \left[g(x,Py)-\ta(y)Px\right].
\end{split}
\]

Now, for the curvature tensors $R$ and $R'$ of $\n$ and $\n'$, it
is valid \eqref{8}, where
\begin{gather}
    p=\left(\lm+\ep\mu+\frac{\ep}{2n}\right)\Om,\qquad
    q=(\mu+\ep\lm)\Om,\label{10}\\[4pt]
    S'(y,z)=\left(\lm+\ep\mu+\frac{\ep}{2n}\right)
    \left(\n'_y\ta\right)z-\frac{\mu+\ep\lm}{2n}\ta(y)\ta(z),\label{11}\\[4pt]
    S''(y,z)=\left(\lm+\ep\mu\right)
    \left(\n'_y\ta\right)z+\frac{\mu+\ep\lm}{2n}\ta(y)\ta(z).\label{12}
\end{gather}

Further we consider manifolds
$(M,P,g)\in\overline{\W}_3\cup\overline{\W}_6$ with closed 1-form
$\ta$. In this case, the tensor $K$, determined by
\begin{equation}\label{13}
    K(x,y,z,w)=\frac{1}{2}\left[R(x,y,z,w)+R(x,y,Pz,Pw)\right],
\end{equation}
is a Riemannian $P$-tensor (\cite{14a}).

If $(M,P,g)\in\overline{\W}_3\cup\overline{\W}_6$ has a closed
1-form $\ta$, then the curvature tensor $R'$ of a natural
connection $\n'$ is also a Riemannian $P$-tensor. Indeed, from
\eqref{8} it is clear, that $R'$ is a Riemannian $P$-tensor if and
only if $\psi_1(S')$ and $\psi_2(S'')$ are curvature-like tensors,
i.e. if and only if $S'(y,z)=S'(z,y)$ and $S''(y,Pz)=S''(z,Py)$.
According to \eqref{11} and  \eqref{12}, the latter conditions are
valid if and only if
\begin{equation}\label{14}
    \left(\n'_y\ta\right)z=\left(\n'_z\ta\right)y.
\end{equation}
In \cite{14}, it is proved that for any $\W_1$-manifold the
following equality is valid:
\[
\begin{split}
    \left(\n'_y\ta\right)z-\left(\n'_z\ta\right)y&=
    \left(\n_y\ta\right)z-\left(\n_z\ta\right)y\\[4pt]
    &-\frac{1}{2n}\left\{\ta(Py)\ta(z)-\ta(y)\ta(Pz)\right\}.
\end{split}
\]
Bearing in mind \eqref{9}, the latter equality implies that
equality \eqref{14} is valid on
$(M,P,g)\in\overline{\W}_3\cup\overline{\W}_6$ if and only if
$\left(\n_y\ta\right)z=\left(\n_z\ta\right)y$, i.e. if and only if
the 1-form $\ta$ is closed.

\begin{thm}\label{thm-2.1}
Let the manifold $(M,P,g)\in\overline{\W}_3\cup\overline{\W}_6$ be
with a closed 1-form $\ta$. Then the following equality is valid
\begin{equation}\label{15}
    K=R'-\left(\psi_1+\psi_2\right)(S),
\end{equation}
where
\begin{equation}\label{16}
\begin{split}
    S(y,z)&=\left(\lm+\ep\mu+\frac{\ep}{4n}\right)
    \left(\n'_y\ta\right)z\\[4pt]
    &+\frac{g(p,p)+g(q,q)}{4}g(y,z)+\frac{g(p,q)}{2}g(y,Pz).
\end{split}
\end{equation}
\end{thm}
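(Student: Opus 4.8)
The plan is to obtain \eqref{15} by symmetrizing the curvature identity \eqref{8} of \thmref{thm-1.1} with respect to the substitution $(z,w)\mapsto(Pz,Pw)$, which is precisely the operation built into the definition \eqref{13} of $K$. Concretely, write \eqref{8} in its $(0,4)$-form at $(x,y,z,w)$ and again at $(x,y,Pz,Pw)$, add the two equalities and divide by $2$: by \eqref{13} the left-hand side becomes $K(x,y,z,w)$, while on $(M,P,g)\in\overline{\W}_3\cup\overline{\W}_6$ the data $p,q,S',S''$ on the right are those of \eqref{10}--\eqref{12}.

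It then remains to evaluate each of the five correction terms under this averaging. For the $R'$-term: since $\ta$ is closed, \eqref{14} holds and, as shown in the paragraph preceding the theorem, $R'$ is a Riemannian $P$-tensor, hence $R'(x,y,Pz,Pw)=R'(x,y,z,w)$ and the averaged $R'$-term is simply $R'$. For the $\pi_i$-terms: from \eqref{2}, the relation $\psi_2(S)(x,y,Pz,Pw)=\psi_1(S)(x,y,z,w)$ and the definitions \eqref{3} one gets $\pi_1(x,y,Pz,Pw)=\pi_2(x,y,z,w)$, $\pi_2(x,y,Pz,Pw)=\pi_1(x,y,z,w)$ and $\pi_3(x,y,Pz,Pw)=\pi_3(x,y,z,w)$, so after averaging the three $\pi$-terms collapse to $-\frac{1}{2}\bigl(g(p,p)+g(q,q)\bigr)(\pi_1+\pi_2)-g(p,q)\pi_3$. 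For the remaining two terms, \eqref{2} gives $\psi_1(S')(x,y,Pz,Pw)=\psi_2(S')(x,y,z,w)$ and likewise $\psi_2(S'')(x,y,Pz,Pw)=\psi_1(S'')(x,y,z,w)$, so their averaged contribution is $-\frac{1}{2}(\psi_1+\psi_2)(S'+S'')$.

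Finally, reassemble. By \eqref{3}, $\pi_1+\pi_2=\frac{1}{2}(\psi_1+\psi_2)(g)$ and $\pi_3=\frac{1}{2}(\psi_1+\psi_2)(\widetilde g)$ with $\widetilde g(y,z)=g(y,Pz)$, so the $\pi$-contribution equals $-(\psi_1+\psi_2)$ applied to the $(0,2)$-tensor $\frac{1}{4}\bigl(g(p,p)+g(q,q)\bigr)g+\frac{1}{2}g(p,q)\widetilde g$. For the $\psi$-contribution, adding \eqref{11} and \eqref{12} the quadratic terms $\mp\frac{\mu+\ep\lm}{2n}\ta(y)\ta(z)$ cancel and $(S'+S'')(y,z)=\bigl(2\lm+2\ep\mu+\frac{\ep}{2n}\bigr)\bigl(\n'_y\ta\bigr)z$, whence $-\frac{1}{2}(\psi_1+\psi_2)(S'+S'')=-(\psi_1+\psi_2)$ applied to $\bigl(\lm+\ep\mu+\frac{\ep}{4n}\bigr)\bigl(\n'_y\ta\bigr)z$. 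Adding the two contributions and using the linearity of $\psi_1+\psi_2$ yields $K=R'-(\psi_1+\psi_2)(S)$ with $S$ exactly \eqref{16}.

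I expect the only real difficulty to be organizational: correctly tracking the $\pi_1\leftrightarrow\pi_2$ interchange (and hence the symmetrization into $\pi_1+\pi_2$), keeping straight into which of $\psi_1,\psi_2$ each summand of \eqref{8} passes under $(z,w)\mapsto(Pz,Pw)$, and verifying the numerical coefficients --- notably the halving $\frac{\ep}{2n}\mapsto\frac{\ep}{4n}$ and the factors $\frac{1}{4},\frac{1}{2}$ in front of the $g$- and $\widetilde g$-terms. As a consistency check one may note that the closedness of $\ta$ together with \eqref{9} makes $S$ symmetric and gives $S(y,Pz)=S(z,Py)$, so $\psi_1(S)$ and $\psi_2(S)$ are curvature-like and $(\psi_1+\psi_2)(S)$ is a Riemannian $P$-tensor, consistently with the fact that both $K$ and $R'$ are.
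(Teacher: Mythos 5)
Your proposal is correct and follows exactly the paper's own route: substitute $Pz,Pw$ for $z,w$ in \eqref{8}/\eqref{17}, add the two equalities, divide by $2$, and use \eqref{13}, the interchange rules $\pi_1\leftrightarrow\pi_2$, $\psi_1\leftrightarrow\psi_2$ from \eqref{2}--\eqref{3}, and the cancellation of the quadratic $\ta\otimes\ta$ terms in $S'+S''$ to arrive at \eqref{15} with $S$ as in \eqref{16}. All the coefficient checks (the halving to $\frac{\ep}{4n}$ and the factors $\frac14$, $\frac12$) are right, so nothing further is needed.
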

\begin{proof}
According to \thmref{thm-1.1}, for $(M,P,g)$ it is valid the
equality
\begin{equation}\label{17}
\begin{split}
    R(x,y,z,w)&=\left\{R'-g(p,p)\pi_1-g(q,q)\pi_2-g(p,q)\pi_3\right.\\[4pt]
    &\phantom{=\left\{\right.}\left.-\psi_1(S')-\psi_2(S'')\right\}(x,y,z,w).
\end{split}
\end{equation}
In \eqref{17}, we substitute $Pz$ and $Pw$ for $z$ and $w$,
respectively. We add the obtained equality to \eqref{17}. Then,
taking into account \eqref{1}, \eqref{2}, \eqref{3}, \eqref{11},
\eqref{12}, \eqref{13}, \eqref{16} and the properties of the
curvature-like tensors $\psi_1(S')$ and $\psi_2(S'')$, we get
 \eqref{15}.
\end{proof}

In Section 3, Section 4 and Section 5,  we find some Riemannian
$P$-tensors determined by $K$ on a manifold
 $(M,P,g)\in\overline{\W}_3\cup\overline{\W}_6$ with a closed
 1-form $\ta$. We establish that the found tensors coincide
 with the corresponding tensors
 determined by the curvature tensor $R'$ of a natural connection
 $\n'$.
 In Section 6,
we find a curvature-like tensor determined by $R$ on such a
manifold and establish that this tensor coincides with the
corresponding tensor determined by the curvature tensor $R'$ of
the special natural connection $D$ investigated in \cite{14}, in
the  case when $D$ has a parallel torsion.

\section{An arbitrary natural connection on a manifold
 $(M,P,g)\in\overline{\W}_3\cup\overline{\W}_6$ with a closed
 1-form $\ta$}

In \cite{4}, it is defined a Bochner tensor $B(L)$ for an
arbitrary Riemannian $P$-tensor $L$ on a $\W_1$-manifold $(M,P,g)$
($\dim M \geq 6$) as follows:
\begin{equation}\label{18}
\begin{split}
    B(L)&=L-\frac{1}{2(n-2)}\left\{(\psi_1+\psi_2)(\rho(L))\phantom{\frac{1}{2(n-1)}}\right.\\[4pt]
    &\left.\phantom{=L}-\frac{1}{2(n-1)}\left[\tau(L)(\pi_1+\pi_2)+\tau^*(L)\pi_3\right]\right\}.
\end{split}
\end{equation}
Let us remark that $B(L)$ is also a Riemannian $P$-tensor.

\begin{thm}\label{thm-3.1}
Let the manifold $(M,P,g)\in\overline{\W}_3\cup\overline{\W}_6$
($\dim M \geq 6$) be with a closed 1-form $\ta$. If $R'$ is the
curvature tensor of a natural connection
 $\n'$, then $B(R') =B(K)$.
\end{thm}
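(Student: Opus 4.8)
The plan is to deduce everything from \thmref{thm-2.1} together with the linearity of the Bochner operator. By \thmref{thm-2.1}, on a manifold $(M,P,g)\in\overline{\W}_3\cup\overline{\W}_6$ with closed 1-form $\ta$ one has $K=R'-(\psi_1+\psi_2)(S)$ with $S$ as in \eqref{16}; here $S$ is symmetric and satisfies $S(x,Py)=S(y,Px)$ (which follows at once from \eqref{16}, \eqref{14} and \eqref{9}), hence also $S(Px,Py)=S(x,y)$, and both $K$ and $R'$ are Riemannian $P$-tensors. Since each of $L\mapsto\rho(L)$, $L\mapsto\tau(L)$, $L\mapsto\tau^*(L)$ and $S'\mapsto(\psi_1+\psi_2)(S')$ is linear, formula \eqref{18} (which is meaningful because $\dim M\geq 6$) exhibits $L\mapsto B(L)$ as a linear operator on the space of Riemannian $P$-tensors. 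Therefore \eqref{15} gives $B(K)=B(R')-B\bigl((\psi_1+\psi_2)(S)\bigr)$, and it remains to prove that $B$ annihilates $(\psi_1+\psi_2)(S)$.

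I would in fact prove the more general statement that $B(N)=0$ for every Riemannian $P$-tensor $N=(\psi_1+\psi_2)(S')$ with $S'$ a symmetric $(0,2)$-tensor such that $S'(x,Py)=S'(y,Px)$ (so that $S'(Px,Py)=S'(x,y)$). The key step is to compute the traces of $N$. Contracting \eqref{1} and \eqref{2} by $g^{ij}$ and using the standard identities $g^{ij}g(e_i,x)\,T(e_j)=T(x)$, the self-adjointness $g(Px,y)=g(x,Py)$, $P^2=\id$, $\tr P=0$, $\widetilde{g}(x,y)=g(x,Py)$ and $S'(Px,Py)=S'(x,y)$, I obtain
\[
\rho(\psi_1(S'))=(2n-2)S'+(\tr S')\,g, \qquad \rho(\psi_2(S'))=-2S'+\sigma\,\widetilde{g},
\]
where $\tr S'=g^{ij}S'(e_i,e_j)$ and $\sigma=g^{ij}S'(e_i,Pe_j)$. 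Thus $\rho(N)=2(n-2)S'+(\tr S')\,g+\sigma\,\widetilde{g}$, and a further contraction yields $\tau(N)=4(n-1)\tr S'$ and $\tau^*(N)=4(n-1)\sigma$.

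Finally I would substitute these into \eqref{18}. By linearity of $\psi_1+\psi_2$ and by \eqref{3}, which give $(\psi_1+\psi_2)(g)=2(\pi_1+\pi_2)$ and $(\psi_1+\psi_2)(\widetilde{g})=2\pi_3$, one finds $(\psi_1+\psi_2)(\rho(N))=2(n-2)N+2(\tr S')(\pi_1+\pi_2)+2\sigma\pi_3$, while $\frac{1}{2(n-1)}\bigl[\tau(N)(\pi_1+\pi_2)+\tau^*(N)\pi_3\bigr]=2(\tr S')(\pi_1+\pi_2)+2\sigma\pi_3$. Hence the brace in \eqref{18} collapses to $2(n-2)N$, so $B(N)=N-\frac{1}{2(n-2)}\cdot 2(n-2)N=0$; applying this with $N=(\psi_1+\psi_2)(S)$ gives $B(K)=B(R')$. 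I expect the only real obstacle to be bookkeeping: getting the contractions $\rho(\psi_2(S'))$ and $\tau^*(N)$ exactly right, where the cancellations depend essentially on $\tr P=0$ and on $S'(Px,Py)=S'(x,y)$. Everything after that is a short algebraic verification.
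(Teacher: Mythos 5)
Your proof is correct and rests on the same computation as the paper's: your contraction identities $\rho\bigl((\psi_1+\psi_2)(S)\bigr)=2(n-2)S+(\tr S)\,g+(\tr\widetilde{S})\,\widetilde{g}$ and $\tau=4(n-1)\tr S$, $\tau^*=4(n-1)\tr\widetilde{S}$ are exactly the paper's \eqref{19} and \eqref{20}. The only difference is organizational: you package them as the statement that $B$ is linear and annihilates $(\psi_1+\psi_2)(S')$ for every admissible symmetric $S'$, whereas the paper solves \eqref{19}--\eqref{20} for $S$ and substitutes back into \eqref{15} via \eqref{21}--\eqref{22}; both routes give $B(K)=B(R')$.
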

\begin{proof}
Relation \eqref{15} implies the following equality for the Ricci
tensors $\rho(K)$ and $\rho'$ of $K$ and $R'$, respectively:
\begin{equation}\label{19}
    \rho(K)=\rho'-\tr S\ g - \tr \widetilde{S}\ \widetilde{g}-2(n-2)S,
\end{equation}
where $\widetilde{S}(y,z)=S(y,Pz)$. Then we get the following
equalities for the scalar curvatures:
\begin{equation}\label{20}
    \tr S = \frac{\tau'-\tau(K)}{4(n-1)},\qquad
    \tr\widetilde{S}=\frac{\tau'^*-\tau^*(K)}{4(n-1)}.
\end{equation}
Equalities \eqref{19} and \eqref{20} imply
\begin{equation}\label{21}
\begin{split}
    S=\frac{1}{2(n-2)}\left\{\rho'-\rho(K)
    -\frac{(\tau'-\tau(K))g + (\tau'^*-\tau^*(K))\widetilde{g}}{4(n-1)}\right\}.
\end{split}
\end{equation}

From \eqref{1}, \eqref{2}, \eqref{3} and \eqref{21}, we have
\begin{equation}\label{22}
\begin{split}
    &(\psi_1+\psi_2)(S)=\\[4pt]
    &=\frac{1}{2(n-2)}\left\{(\psi_1+\psi_2)(\rho')-(\psi_1+\psi_2)(\rho(K))
    \phantom{\frac{1}{2(n-2)}\left\{\right.=}\right.\\[4pt]
    &\left.\phantom{\frac{1}{2(n-2)}\left\{\right.=}-\frac{(\tau'-\tau(K))(\pi_1+\pi_2)
     + (\tau'^*-\tau^*(K))\pi_3}{2(n-1)}\right\}.
\end{split}
\end{equation}
Using \eqref{22}, \eqref{15} and the definition \eqref{18} of the
Bochner tensor, we obtain $B(K)=B(R')$.
\end{proof}


\section{The canonical connection on a manifold
 $(M,P,g)\in\overline{\W}_3\cup\overline{\W}_6$ with a closed
 1-form $\ta$}

The canonical connection on a Riemannian almost product  manifold
is a natural connection introduced in \cite{5} as an analogue of
the Hermitian connection on almost Hermitian manifold. A
connection of such a type on almost contact B-metric manifolds is
considered in \cite{ManIv38}, \cite{ManIv40}.

We define the tensor $A(L)$ for an arbitrary Riemannian $P$-tensor
$L$ by the equality
\begin{equation}\label{23}
    A(L)=L-\frac{\tau(L)(\pi_1+\pi_2-\ep\pi_3)}{4n(n-1)}.
\end{equation}
Obviously, $A(L)$ is also a Riemannian $P$-tensor.

\begin{thm}\label{thm-4.1}
Let the manifold $(M,P,g)\in\overline{\W}_3\cup\overline{\W}_6$ be
with a closed 1-form $\ta$. If $R'$ is the curvature tensor of the
canonical connection, then $A(R') =A(K)$.
\end{thm}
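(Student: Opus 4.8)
The plan is to mimic the proof of \thmref{thm-3.1}, exploiting the fact that the canonical connection is a specific natural connection, so \thmref{thm-2.1} applies and gives $K=R'-(\psi_1+\psi_2)(S)$ with $S$ as in \eqref{16}. The key extra ingredient must be a special property of the canonical connection that makes the first-order term $\left(\n'_y\ta\right)z$ in \eqref{16} simplify. Concretely, I expect that for the canonical connection the coefficient $\lm+\ep\mu+\frac{\ep}{4n}$ vanishes, or equivalently that the canonical connection corresponds to the parameter values that kill the $\left(\n'_y\ta\right)z$ contribution, leaving
\begin{equation*}
    S(y,z)=\frac{g(p,p)+g(q,q)}{4}g(y,z)+\frac{g(p,q)}{2}g(y,Pz),
\end{equation*}
i.e. $S$ is a constant-coefficient combination of $g$ and $\widetilde{g}$. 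In that case $(\psi_1+\psi_2)(S)$ is, up to scalars, a combination of $\pi_1+\pi_2$ and $\pi_3$ only.

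Granting that, the next step is purely computational: substitute this $S$ into \eqref{15}, take traces to express the scalars $g(p,p)$, $g(q,q)$, $g(p,q)$ — which by \eqref{10} are constant multiples of $g(\Om,\Om)=\|\ta\|^2$ — in terms of $\tau(K)$ and $\tau'=\tau(R')$. Contracting $K=R'-(\psi_1+\psi_2)(S)$ gives $\rho(K)=\rho'-c_1 g-c_2\widetilde{g}$ for suitable constants $c_1,c_2$, and a further contraction gives $\tau(K)=\tau'-2n c_1$ (and similarly $\tau^*(K)=\tau'^*-2nc_2$, with a compatibility forcing $c_2$ to be tied to $c_1$ via $\ep$ because of \eqref{9}). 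Solving yields $\frac{g(p,p)+g(q,q)}{4}=\frac{\tau'-\tau(K)}{4n(\dots)}$ and likewise for the $\widetilde{g}$-coefficient, so that $(\psi_1+\psi_2)(S)=\frac{(\tau'-\tau(K))(\pi_1+\pi_2)-\ep(\tau'-\tau(K))\pi_3}{4n(n-1)}$, using $\pi_3=\psi_1(\widetilde g)=\psi_2(\widetilde g)$ and relation \eqref{9}. Plugging into \eqref{15} and comparing with the definition \eqref{23} of $A(L)$ then gives $A(K)=A(R')$.

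The main obstacle I anticipate is precisely the first paragraph's claim: I must verify from \cite{5} (or by re-deriving it) that the canonical connection is the member of the two-parameter family \eqref{4} for which the residual first-order term $\left(\lm+\ep\mu+\frac{\ep}{4n}\right)\left(\n'_y\ta\right)z$ drops out of $S$ — either because the coefficient is zero on $\overline\W_3\cup\overline\W_6$, or because for the canonical connection one has the stronger vanishing $\left(\n'_y\ta\right)z=0$ on this class. I would pin down the canonical connection's parameters $(\lm,\mu)$ explicitly, use \eqref{9} to reduce, and check the coefficient; this is the one place where the hypothesis "canonical connection" (as opposed to an arbitrary natural connection, which only gave the weaker \thmref{thm-3.1}) is genuinely used. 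Once the scalar coefficient of $\left(\n'_y\ta\right)z$ in $S$ is shown to vanish, the rest is the same trace-chasing bookkeeping as in \thmref{thm-3.1}, just with the simpler target tensor $\pi_1+\pi_2-\ep\pi_3$ instead of the full Bochner combination.
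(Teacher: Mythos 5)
Your proposal is correct and follows essentially the same route as the paper: the canonical connection corresponds to $\lm=0$, $\mu=-\frac{1}{4n}$, so the coefficient $\lm+\ep\mu+\frac{\ep}{4n}$ in \eqref{16} indeed vanishes, leaving $S=\frac{\ta(\Om)}{32n^2}(g-\ep\widetilde{g})$, and the subsequent trace computation yielding $(\psi_1+\psi_2)(S)=\frac{(\tau'-\tau(K))(\pi_1+\pi_2-\ep\pi_3)}{4n(n-1)}$ matches the paper's. The one step you flagged as needing verification is exactly the fact the paper imports from its reference for the canonical connection's parameter values.
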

\begin{proof}
In \cite{14}, it is shown the canonical connection on a
$\W_1$-manifold is determined by $\lm=0$ and $\mu=-\frac{1}{4n}$.
Then, \eqref{10} implies $p=\frac{\ep\Om}{4n}$,
$q=-\frac{\Om}{4n}$ and therefore
\begin{equation}\label{24}
    g(p,p)=g(q,q)=-\ep g(p,q)=\frac{\ta(\Om)}{16n^2}.
\end{equation}
From \eqref{16} and \eqref{24} it is follows
$S=\frac{\ta(\Om)}{32n^2}(g-\ep \widetilde{g})$. Then, because of
\eqref{3}, we have
$(\psi_1+\psi_2)(S)=\frac{\ta(\Om)}{16n^2}(\pi_1+\pi_2-\ep
\pi_3)$. Thus, \eqref{15} takes the form
\begin{equation}\label{25}
    K=R'-\frac{\ta(\Om)(\pi_1+\pi_2-\ep\pi_3)}{16n^2}.
\end{equation}

By virtue of \eqref{25}, we obtain the following equalities
\begin{equation}\label{26}
\begin{split}
    &\rho(K)=\rho'-\frac{(n-1)\ta(\Om)(g-\ep\widetilde{g})}{8n^2},\\[4pt]
    &\ta(\Om)=\frac{4n(\tau'-\tau(K))}{n-1}=-\frac{4n\ep(\tau'^*-\tau^*(K))}{n-1}.
\end{split}
\end{equation}
Bearing in mind \eqref{25} and \eqref{26}, by suitable
calculations we get
\[
    R'-\frac{\tau'(\pi_1+\pi_2-\ep\pi_3)}{4n(n-1)}=
K-\frac{\tau(K)(\pi_1+\pi_2-\ep\pi_3)}{4n(n-1)}.
\]
Then, according to \eqref{23}, we have $A(R')=A(K)$.
\end{proof}

In \cite{6}, a 2-plane $\al=(x,y)$ in $T_cM$ is called a totally
real 2-plane if $\al$ is orthogonal to $P\al$. Its sectional
curvatures with respect to $R'$
\[
\nu'=\frac{R'(x,y,y,x)}{\pi_1(x,y,y,x)},\qquad
\nu'^*=\frac{R'(x,y,y,Px)}{\pi_1(x,y,y,x)}
\]
are called totally real sectional curvatures with respect to $R'$.

\begin{thm}\label{thm-4.2}
A manifold $(M,P,g)\in\overline{\W}_3\cup\overline{\W}_6$ with a
closed 1-form $\ta$ has point-wise constant totally real sectional
curvatures
\begin{equation*}
\nu'=\frac{\tau'}{4n(n-1)},\qquad \nu'^*=-\frac{\ep\tau'}{4n(n-1)}
\end{equation*}
with respect to the curvature tensor $R'$ of the canonical
connection if and only if $A(R')=0$ (or equivalently $A(K)=0$).
\end{thm}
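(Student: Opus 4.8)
The plan is to reduce the whole statement to the single tensor $\Phi:=A(R')$, which by \eqref{23} and the remarks preceding \thmref{thm-2.1} is a Riemannian $P$-tensor and equals $R'-\tfrac{\tau'}{4n(n-1)}(\pi_1+\pi_2-\ep\pi_3)$. First I would evaluate the model tensors $\pi_1,\pi_2,\pi_3$ on a totally real $2$-plane $\al=(x,y)$. Since $\al$ is orthogonal to $P\al$ we have $g(x,Px)=g(y,Py)=g(x,Py)=0$, and with $P^2=\id$ a direct substitution in \eqref{3} gives $\pi_2(x,y,y,x)=\pi_3(x,y,y,x)=0$, $\pi_1(x,y,y,Px)=\pi_2(x,y,y,Px)=0$ and $\pi_3(x,y,y,Px)=\pi_1(x,y,y,x)$. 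Writing $\gamma:=\pi_1(x,y,y,x)=g(x,x)g(y,y)-g(x,y)^2>0$, it follows that
\[
\Phi(x,y,y,x)=\Big(\nu'-\tfrac{\tau'}{4n(n-1)}\Big)\gamma,\qquad
\Phi(x,y,y,Px)=\Big(\nu'^*+\tfrac{\ep\tau'}{4n(n-1)}\Big)\gamma.
\]
If $A(R')=0$ the two right-hand sides vanish for \emph{every} totally real $2$-plane, i.e. $\nu'$ and $\nu'^*$ have the asserted point-wise constant values; this proves one implication, and $A(K)=0$ then follows from \thmref{thm-4.1}.

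For the converse I would assume the stated constancy of $\nu'$ and $\nu'^*$; by the two displayed identities this means $\Phi(x,y,y,x)=\Phi(x,y,y,Px)=0$ for every totally real $2$-plane. Fix a point $c\in M$ and split $T_cM=V_+\oplus V_-$ into the $(\pm1)$-eigenspaces of $P$, which are orthogonal and of dimension $n$. A routine check --- combining the two antisymmetries, the first Bianchi identity and the pair symmetry of a curvature-like tensor with the relation $\Phi(x,y,Pz,Pw)=\Phi(x,y,z,w)$, whence also $\Phi(Px,Py,z,w)=\Phi(x,y,z,w)$ --- shows that $\Phi(x,y,z,w)=0$ whenever the four arguments do not all lie in the same eigenspace; for arguments with $x,y\in V_+$ and $z,w\in V_-$ this is forced by the Bianchi identity, whose remaining two terms have a mixed first pair and so already vanish. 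Consequently $\Phi$ is the orthogonal direct sum of algebraic curvature tensors $\Phi^+$ on $V_+$ and $\Phi^-$ on $V_-$: with $x=x^++x^-$, $y=y^++y^-$, etc., one has $\Phi(x,y,z,w)=\Phi^+(x^+,y^+,z^+,w^+)+\Phi^-(x^-,y^-,z^-,w^-)$.

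Finally I would use that totally real $2$-planes separate the two blocks. Given linearly independent $u,v\in V_+$, pick $a,b\in V_-$ with $g(a,a)=g(u,u)$, $g(b,b)=g(v,v)$ and $g(a,b)=g(u,v)$, which is possible since $\dim V_-=n\ge2$ (the ratio $\tfrac{\tau'}{4n(n-1)}$ requires $n\ne1$). Then $x=u+a$, $y=v+b$ span a totally real $2$-plane with $Px=u-a$ and $Py=v-b$, so $0=\Phi(x,y,y,x)=\Phi^+(u,v,v,u)+\Phi^-(a,b,b,a)$ and $0=\Phi(x,y,y,Px)=\Phi^+(u,v,v,u)-\Phi^-(a,b,b,a)$; adding gives $\Phi^+(u,v,v,u)=0$ for all $u,v\in V_+$. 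Since an algebraic curvature tensor is determined by its sectional curvatures, $\Phi^+=0$, and by symmetry $\Phi^-=0$, hence $\Phi=A(R')=0$ at $c$ and therefore on all of $M$. By \thmref{thm-4.1} the conditions $A(R')=0$ and $A(K)=0$ are equivalent, which completes the argument.

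The main obstacle is the converse direction, and inside it the structural lemma that a Riemannian $P$-tensor decomposes as a pair of independent algebraic curvature tensors on the eigenspaces of $P$; the delicate part is the bookkeeping of which mixed components vanish (the interplay of $P$-invariance with the first Bianchi identity), after which the classical fact that sectional curvatures determine an algebraic curvature tensor finishes the proof.
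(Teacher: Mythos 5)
Your proposal is correct, and it takes a genuinely different route from the paper. The paper's own proof is two lines: it observes that $A(R')=0$ is by \eqref{23} equivalent to $R'=\frac{\tau'(\pi_1+\pi_2-\ep\pi_3)}{4n(n-1)}$ and then simply invokes Staikova's characterization in \cite{6} of Riemannian $P$-tensors of the form $a(\pi_1+\pi_2)+b\pi_3$ via point-wise constant totally real sectional curvatures. You instead prove that characterization from scratch: the easy direction by evaluating $\pi_1,\pi_2,\pi_3$ on a totally real $2$-plane (your values check out, in particular $\pi_3(x,y,y,Px)=\pi_1(x,y,y,x)$, which is what produces the factor $-\ep$ in $\nu'^*$), and the converse by the structural observation that a Riemannian $P$-tensor splits into two algebraic curvature tensors on the $(\pm1)$-eigenspaces of $P$ --- the mixed-pair components die by $\Phi(Px,Py,z,w)=\Phi(x,y,z,w)$ (which you correctly derive from $P$-invariance plus the pair symmetry), and the $(++,--)$ block dies by the first Bianchi identity --- followed by the matching-Gram-matrix trick to show that totally real planes detect both blocks, and polarization. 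This is a complete, self-contained replacement for the external reference; what the paper's approach buys is brevity, while yours makes the argument verifiable without access to \cite{6} and makes explicit where $n\geq 2$ and the Riemannian (positive-definite) signature are used. The only stylistic caveat is that your converse needs the full hypothesis that the constants equal the specific values $\frac{\tau'}{4n(n-1)}$ and $-\frac{\ep\tau'}{4n(n-1)}$ (not merely point-wise constancy), which is indeed how the theorem is phrased, so no gap results.
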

\begin{proof}
According to \eqref{23}, the condition for annulment of $A(R')$ is
the condition
\[
    R'=\frac{\tau'(\pi_1+\pi_2-\ep\pi_3)}{4n(n-1)}.
\]
Then, bearing in mind \cite{6}, we establish the truthfulness of
the statement.
\end{proof}


\section{An natural connection with parallel torsion on a manifold
 $(M,P,g)\in\overline{\W}_3\cup\overline{\W}_6$ with a closed
 1-form $\ta$}

We define the tensor $C(L)$ for an arbitrary Riemannian $P$-tensor
$L$ by the equality
\begin{equation}\label{26'}
    C(L)=L-\frac{\tau(L)(\pi_1+\pi_2)+\tau^*(L)\pi_3}{4n(n-1)}.
\end{equation}
Obviously, $C(L)$ is also a Riemannian $P$-tensor.

\begin{thm}\label{thm-5.1}
Let the manifold $(M,P,g)\in\overline{\W}_3\cup\overline{\W}_6$ be
with a closed 1-form $\ta$. If $R'$ is the curvature tensor of a
natural connection with a parallel torsion, then $C(R') =C(K)$.
\end{thm}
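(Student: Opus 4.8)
The plan is to mimic the structure of the proof of \thmref{thm-4.1}, but now exploiting the extra hypothesis that $\n'$ has parallel torsion rather than fixing $\lm,\mu$ to the canonical values. The starting point is the general relation \eqref{15}--\eqref{16} from \thmref{thm-2.1}, which expresses $K$ as $R'-(\psi_1+\psi_2)(S)$ with $S$ built from $(\n'_y\ta)z$ and from the quadratic terms $g(p,p)g(y,z)$, $g(q,q)g(y,z)$, $g(p,q)g(y,Pz)$. The tensor $C(L)$ in \eqref{26'} differs from $A(L)$ in \eqref{23} precisely in that it subtracts the full combination $\tau(L)(\pi_1+\pi_2)+\tau^*(L)\pi_3$ instead of the $\ep$-twisted combination $\tau(L)(\pi_1+\pi_2-\ep\pi_3)$; this is exactly what one needs when $S$ is no longer proportional to $g-\ep\widetilde g$ but is a genuinely independent symmetric tensor.

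First I would show that the parallel-torsion hypothesis forces $(\n'_y\ta)z = 0$. Since $\ta(x)=g^{ij}F(e_i,e_j,x)$ is, up to constants, the trace of the torsion via \eqref{4} and \eqref{7} (on $\overline\W_3\cup\overline\W_6$ the torsion $T$ is, by the displayed formula for $Q$ together with \eqref{9}, a universal $g$- and $\widetilde g$-combination whose only ``variable'' ingredient is $\ta$ and $\Om$), $\n'T=0$ together with $\n'g=0$ yields $\n'\ta=0$, hence $(\n'_y\ta)z=0$. (Alternatively, \eqref{14} already gives the symmetric part; the skew part is killed by $\n'T=0$.) I expect this to be the main obstacle: one must be careful that on $\overline\W_3\cup\overline\W_6$ the coefficients $\lm+\ep\mu+\tfrac{\ep}{2n}$ and $\mu+\ep\lm$ in $Q$ are constants, so that parallelism of $T$ genuinely transfers to parallelism of $\ta$, and one should check the degenerate case where both coefficients vanish (then $Q=0$, $\n'=\n$, and $R'=R$, $K=C(K)$ trivially after noting $\ta$ is then $\n$-parallel).

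Once $(\n'_y\ta)z=0$, formula \eqref{16} collapses to
\begin{equation*}
    S = \frac{g(p,p)+g(q,q)}{4}\,g + \frac{g(p,q)}{2}\,\widetilde g,
\end{equation*}
a constant linear combination of $g$ and $\widetilde g$. Then by \eqref{3}, $(\psi_1+\psi_2)(S) = \frac{g(p,p)+g(q,q)}{2}(\pi_1+\pi_2) + g(p,q)\,\pi_3$, so \eqref{15} reads $K = R' - a(\pi_1+\pi_2) - b\,\pi_3$ with $a=\tfrac12(g(p,p)+g(q,q))$, $b=g(p,q)$ constants. Taking traces (using $\tr\pi_1=\tr\pi_2 = (n-1)\cdot$[constant], and the known trace identities for $\pi_1,\pi_2,\pi_3$ already invoked in the proofs of \thmref{thm-3.1} and \thmref{thm-4.1}), I get two linear relations expressing $\tau'-\tau(K)$ and $\tau'^*-\tau^*(K)$ as fixed multiples of $a$ and $b$; inverting them recovers $a(\pi_1+\pi_2)+b\pi_3 = \dfrac{(\tau'-\tau(K))(\pi_1+\pi_2)+(\tau'^*-\tau^*(K))\pi_3}{4n(n-1)}$.

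Finally I substitute this back into $K = R' - a(\pi_1+\pi_2) - b\pi_3$ and rearrange to
\begin{equation*}
    R' - \frac{\tau'(\pi_1+\pi_2)+\tau'^*\pi_3}{4n(n-1)} = K - \frac{\tau(K)(\pi_1+\pi_2)+\tau^*(K)\pi_3}{4n(n-1)},
\end{equation*}
which by the definition \eqref{26'} is exactly $C(R')=C(K)$. The only computation requiring care is the trace bookkeeping in the previous paragraph — matching the coefficient $4n(n-1)$ — but this is the same arithmetic already carried out for \thmref{thm-4.1}, just without the $\ep$-specialization.
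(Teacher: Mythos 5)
Your proposal is correct and follows essentially the same route as the paper: reduce $S$ in \eqref{16} to the constant combination $\frac{g(p,p)+g(q,q)}{4}g+\frac{g(p,q)}{2}\widetilde g$, apply \eqref{3}, take Ricci and scalar traces, and substitute back into \eqref{15} to match the definition \eqref{26'}. The only difference is that the paper simply cites the reference of Gribacheva--Mekerov for the key fact that parallel torsion is equivalent to $\n'\ta=0$, whereas you sketch a proof of it (and your worry about a degenerate case is moot, since the coefficients $\lm+\ep\mu+\frac{\ep}{2n}$ and $\mu+\ep\lm$ cannot vanish simultaneously).
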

\begin{proof}
In \cite{14}, it is proved that a natural connection $\n'$ on a
$\W_1$-manifold has a parallel torsion if and only if the 1-form
$\ta$ is also parallel, i.e. $\n'\ta=0$. Then, \eqref{16} implies
\[
S=\frac{g(p,p)+g(q,q)}{4}g+\frac{g(p,q)}{2}\widetilde{g}.
\]
Then, because of \eqref{3}, we have
\[
(\psi_1+\psi_2)(S)=\frac{g(p,p)+g(q,q)}{2}(\pi_1+\pi_2)+
g(p,q)\pi_3.
\]
Thus, \eqref{15} takes the form
\begin{equation}\label{27}
    K=R'-\frac{g(p,p)+g(q,q)}{2}(\pi_1+\pi_2)+g(p,q)\pi_3.
\end{equation}

By virtue of \eqref{27}, we obtain
\[
\rho(K)=\rho'-(n-1)[g(p,p)+g(q,q)]g-2(n-1)g(p,q)\widetilde{g}),
\]
which implies
\begin{equation}\label{28}
\begin{split}
    &\tau(K)=\tau'-2n(n-1)[g(p,p)+g(q,q)], \\[4pt]
    &\tau^*(K)=\tau'^*-4n(n-1)g(p,q).
\end{split}
\end{equation}
Bearing in mind \eqref{27} and \eqref{28}, by suitable
calculations we get
\[
    R'-\frac{\tau'(\pi_1+\pi_2)+\tau'^*\pi_3}{4n(n-1)}=
    K-\frac{\tau(K)(\pi_1+\pi_2)+\tau^*(K)\pi_3}{4n(n-1)}.
\]
Then, according to \eqref{26'}, we have $C(R')=C(K)$.
\end{proof}

\begin{thm}\label{thm-5.2}
A manifold $(M,P,g)\in\overline{\W}_3\cup\overline{\W}_6$ with a
closed 1-form $\ta$ has point-wise constant totally real sectional
curvatures
\begin{equation*}
\nu'=\frac{\tau'}{4n(n-1)},\qquad \nu'^*=\frac{\tau'^*}{4n(n-1)}
\end{equation*}
with respect to the curvature tensor $R'$ of an arbitrary natural
connection with parallel torsion if and only if $C(R')=0$ (or
equivalently $C(K)=0$).
\end{thm}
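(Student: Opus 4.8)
The plan is to argue exactly as in the proof of \thmref{thm-4.2}, with the tensor $C$ playing the role of $A$. First, since $\n'$ is a natural connection with parallel torsion, \thmref{thm-5.1} yields $C(R')=C(K)$, so the conditions $C(R')=0$ and $C(K)=0$ are equivalent and it suffices to deal with $C(R')$. Unravelling the definition \eqref{26'}, the vanishing $C(R')=0$ is equivalent to
\[
R'=\frac{\tau'(\pi_1+\pi_2)+\tau'^*\pi_3}{4n(n-1)},
\]
so the statement reduces to showing that $R'$ has this form if and only if its totally real sectional curvatures $\nu'$, $\nu'^*$ are point-wise constant (in which case they are necessarily the values stated in the theorem).

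For the direction from $C(R')=0$ to point-wise constancy I would fix a totally real $2$-plane $\al=(x,y)$ in $T_cM$, so that
\[
g(x,Px)=g(y,Py)=g(x,Py)=g(y,Px)=0,
\]
and evaluate $\pi_1$, $\pi_2$, $\pi_3$ on the arguments $(x,y,y,x)$ and $(x,y,y,Px)$ by means of \eqref{1}--\eqref{3}. A short computation with $\widetilde{g}(x,y)=g(x,Py)$ and $P^2=\id$ shows that on such a plane only $\pi_1(x,y,y,x)$ and $\pi_3(x,y,y,Px)$ are non-zero, both being equal to the Gram determinant $g(x,x)g(y,y)-g(x,y)^2$, while $\pi_2(x,y,y,x)$, $\pi_3(x,y,y,x)$, $\pi_1(x,y,y,Px)$ and $\pi_2(x,y,y,Px)$ all vanish. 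Substituting the displayed expression for $R'$ into the definitions of $\nu'$ and $\nu'^*$ then gives $\nu'=\frac{\tau'}{4n(n-1)}$ and $\nu'^*=\frac{\tau'^*}{4n(n-1)}$, and since these values do not depend on $\al$ they are point-wise constant.

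For the converse I would invoke the classification of Riemannian $P$-tensors with point-wise constant totally real sectional curvatures given in \cite{6}: since $R'$ is a Riemannian $P$-tensor (established in Section~2 for natural connections on manifolds of $\overline\W_3\cup\overline\W_6$ with closed $\ta$), constancy of $\nu'$ and $\nu'^*$ forces $R'=\nu'(\pi_1+\pi_2)+\nu'^*\pi_3$. Contracting this identity with $g$ and with the associated metric, and using $\tr g=2n$, $\tr P=0$ together with the scalar-curvature relations that underlie \eqref{28}, namely $\tau(\pi_1+\pi_2)=4n(n-1)$, $\tau^*(\pi_3)=4n(n-1)$ and $\tau(\pi_3)=\tau^*(\pi_1+\pi_2)=0$, I would identify $\nu'=\frac{\tau'}{4n(n-1)}$ and $\nu'^*=\frac{\tau'^*}{4n(n-1)}$; then $R'$ equals the displayed tensor, so $C(R')=0$ by \eqref{26'}, and $C(K)=0$ follows from $C(R')=C(K)$. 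The only genuinely non-elementary ingredient is the rigidity result of \cite{6} used in this last step; everything else is routine bookkeeping with \eqref{1}--\eqref{3} and the identities of Section~5.
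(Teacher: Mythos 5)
Your proposal is correct and follows essentially the same route as the paper: reduce $C(R')=0$ (equivalently $C(K)=0$, via \thmref{thm-5.1}) to the identity $R'=\frac{\tau'(\pi_1+\pi_2)+\tau'^*\pi_3}{4n(n-1)}$ and then appeal to the characterization from \cite{6} of Riemannian $P$-tensors with point-wise constant totally real sectional curvatures. The paper's proof is just a two-line citation of \cite{6}; you have merely written out the evaluations of $\pi_1,\pi_2,\pi_3$ on totally real $2$-planes and the trace identities ($\tau(\pi_1+\pi_2)=\tau^*(\pi_3)=4n(n-1)$, $\tau(\pi_3)=\tau^*(\pi_1+\pi_2)=0$) that the paper leaves implicit, all of which check out.
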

\begin{proof}
According to \eqref{26'}, the condition for annulment of $C(R')$
is the condition
\[
    R'=\frac{\tau'(\pi_1+\pi_2)+\tau'^*\pi_3}{4n(n-1)}.
\]
Then, bearing in mind \cite{6}, we establish the truthfulness of
the statement.
\end{proof}


\section{The natural connection $D$ ($\lm=\mu=0$) with parallel torsion on a manifold
 $(M,P,g)\in\overline{\W}_3\cup\overline{\W}_6$ with a closed
 1-form $\ta$}

In \cite{8}, it is studied the natural connection $D$ determined
by $\lm=\mu=0$ on a $\W_1$-manifold $(M,P,g)$.

Now we consider the case when
$(M,P,g)\in\overline{\W}_3\cup\overline{\W}_6$ is with a closed
 1-form $\ta$ and the connection $\n'=D$
has a parallel torsion. Then, from \eqref{10}, \eqref{11} and
\eqref{12} we have $p=\frac{\ep\Om}{2n}$, $q=S'=S''=0$ and
therefore \eqref{8} takes the form
\begin{equation}\label{29}
    R=R'-\frac{\ta(\Om)\pi_1}{4n^2}.
\end{equation}
The latter equality implies
$
    \rho=\rho'-\frac{(2n-1)\ta(\Om)}{4n^2}g,
$ which gives us
\begin{equation}\label{30}
    \tau=\tau'-\frac{(2n-1)\ta(\Om)}{2n},\qquad \tau^*=\tau'^*.
\end{equation}

We define the tensor $E(L)$ for an arbitrary curvature-like tensor
$L$ by the equality
\begin{equation}\label{31}
    E(L)=L-\frac{\tau(L)\pi_1}{2n(2n-1)}.
\end{equation}
Obviously, $E(L)$ is also a curvature-like tensor.

\begin{thm}\label{thm-6.1}
Let the manifold $(M,P,g)\in\overline{\W}_3\cup\overline{\W}_6$ be
with a closed 1-form $\ta$. If $R'$ is the curvature tensor of the
connection $D$ with parallel torsion, then $E(R') =E(R)$.
\end{thm}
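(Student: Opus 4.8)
The plan is to mirror the argument used for \thmref{thm-4.1} and \thmref{thm-5.1}, but now working at the level of the raw curvature tensors $R$ and $R'$ rather than with the Riemannian $P$-tensor $K$, since $E$ is defined for curvature-like tensors. The starting point is equality \eqref{29}, $R=R'-\frac{\ta(\Om)}{4n^2}\pi_1$, which is already established for the connection $D$ with parallel torsion from \thmref{thm-1.1} together with \eqref{10}--\eqref{12} and the fact that a natural connection on a $\W_1$-manifold has parallel torsion iff $\n'\ta=0$. Taking the appropriate traces of \eqref{29} gives \eqref{30}, namely $\tau=\tau'-\frac{(2n-1)\ta(\Om)}{2n}$ (the relation $\tau^*=\tau'^*$ will not be needed here, since $E$ involves only $\tau$ and $\pi_1$).

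The key step is then to solve \eqref{30} for $\ta(\Om)$, obtaining $\ta(\Om)=\frac{2n(\tau'-\tau)}{2n-1}$, and substitute this back into \eqref{29}. This yields
\begin{equation*}
    R=R'-\frac{\tau'-\tau}{2n(2n-1)}\,\pi_1,
\end{equation*}
which rearranges to
\begin{equation*}
    R'-\frac{\tau'\pi_1}{2n(2n-1)} = R-\frac{\tau\pi_1}{2n(2n-1)}.
\end{equation*}
By the definition \eqref{31} of the tensor $E$, the left-hand side is precisely $E(R')$ and the right-hand side is $E(R)$, so $E(R')=E(R)$ as claimed.

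There is essentially no obstacle here beyond the bookkeeping already carried out in Section~6 preceding the statement: the genuine content — the structure of the torsion of $D$, the reduction to $p=\frac{\ep\Om}{2n}$, $q=S'=S''=0$, and the resulting clean form \eqref{29} — has been recorded in advance, so the proof is a one-line algebraic elimination of the scalar $\ta(\Om)$ between \eqref{29} and \eqref{30}. The only point requiring a small check is that $E$ indeed returns a curvature-like tensor (so that the equality is between objects of the same type), but this is immediate since $\pi_1$ is curvature-like by \eqref{3} and $R,R'$ are curvature-like, and it has already been remarked after \eqref{31}.
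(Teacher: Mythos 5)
Your proposal is correct and follows exactly the paper's own argument: the paper likewise combines \eqref{29} and \eqref{30} to eliminate $\ta(\Om)$ and arrive at $R-\frac{\tau\pi_1}{2n(2n-1)}=R'-\frac{\tau'\pi_1}{2n(2n-1)}$, then invokes \eqref{31}. You have merely written out the one-line elimination that the paper leaves implicit.
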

\begin{proof}
Equalities \eqref{29} and \eqref{30} imply
\[
R-\frac{\tau\pi_1}{2n(2n-1)}=R'-\frac{\tau'\pi_1}{2n(2n-1)}.
\]
Then, according to \eqref{31}, we have $E(R') =E(R)$.
\end{proof}

\begin{thm}\label{thm-6.2}
Let the manifold $(M,P,g)\in\overline{\W}_3\cup\overline{\W}_6$ be
with a closed 1-form $\ta$ and $D$ be with a parallel torsion.
Then $D$ is flat if and only if $E(R')=0$ (or equivalently
$E(R)=0$).
\end{thm}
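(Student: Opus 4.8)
The plan is to show that both directions of the equivalence follow almost immediately from \thmref{thm-6.1}, which has just established $E(R')=E(R)$ under the hypotheses that $\ta$ is closed and $D$ has parallel torsion. Since those are exactly the standing hypotheses of this statement, I may invoke that identity freely.

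First I would observe that ``$D$ is flat'' means $R'=0$ (the curvature tensor of $D$ vanishes), and in that case all contractions of $R'$ vanish too, so $\tau'=0$; hence $E(R')=R'-\frac{\tau'\pi_1}{2n(2n-1)}=0$. Combining with $E(R')=E(R)$ from \thmref{thm-6.1}, this gives $E(R)=0$, so the ``only if'' direction and the parenthetical equivalence are both covered.

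For the converse, suppose $E(R')=0$. By definition \eqref{31} this reads $R'=\frac{\tau'\pi_1}{2n(2n-1)}$. I would then take the Ricci contraction of this identity: using $\rho(\pi_1)=(2n-1)g$ (which one reads off from \eqref{1} and \eqref{3}, since $\pi_1=\frac12\psi_1(g)$ and contracting $\psi_1(g)$ over a $2n$-dimensional space yields $2(2n-1)g$), we get $\rho'=\frac{\tau'}{2n}g$, and contracting once more gives $\tau'=\frac{\tau'}{2n}\cdot 2n=\tau'$, which is vacuous — so instead I would argue directly: from $R'=\frac{\tau'\pi_1}{2n(2n-1)}$ and \eqref{29}, \eqref{30} we have $R=R'-\frac{\ta(\Om)\pi_1}{4n^2}$ and $\tau=\tau'-\frac{(2n-1)\ta(\Om)}{2n}$; substituting, $R=\frac{\tau'\pi_1}{2n(2n-1)}-\frac{\ta(\Om)\pi_1}{4n^2}=\frac{\tau\pi_1}{2n(2n-1)}$, i.e.\ $E(R)=0$ as well. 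But the cleanest route to flatness is: $E(R')=0$ together with \thmref{thm-6.1} gives $E(R)=0$, i.e.\ $R=\frac{\tau\pi_1}{2n(2n-1)}$, so $R$ has constant sectional curvature $\frac{\tau}{2n(2n-1)}$; then from \eqref{29}, $R'=R+\frac{\ta(\Om)\pi_1}{4n^2}$ is a constant multiple of $\pi_1$, say $R'=c\,\pi_1$. Taking the Ricci tensor yields $\tau'=2n(2n-1)c$, but also $E(R')=0$ forces $c=\frac{\tau'}{2n(2n-1)}$, consistent for any $c$; so the vanishing of $R'$ does not follow from scalar data alone and one must exploit more structure.

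The main obstacle, then, is precisely that ``$E(R')=0$'' only pins $R'$ down to a scalar multiple of $\pi_1$, and one needs to show that scalar is zero. I expect the resolution is that on a manifold of the class $\overline{\W}_3\cup\overline{\W}_6$ with parallel torsion for $D$ (hence $\n'\ta=0$, so $\ta(\Om)$ is a constant), the relation \eqref{29} together with an independent identity — most plausibly the fact that $R'$, being the curvature of a natural connection whose torsion is determined by \eqref{4} with $\lm=\mu=0$, satisfies a Bianchi-type or $P$-compatibility constraint that is incompatible with $R'=c\,\pi_1$ unless $c=0$ — forces $c=0$. Concretely I would compute $R'$ directly from $D$: since $D=\n+Q$ with $Q$ explicit (from the displayed formula for $Q$ with $\lm=\mu=0$, giving $Q(x,y)=\frac{\ep}{2n}[g(x,y)\Om-\ta(y)x]$), the curvature $R'$ carries the specific algebraic signature of this $Q$, and matching $R'=c\,\pi_1$ against that signature should yield $c=0$, hence $R'=0$. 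So the proof I would write is: ($\Rightarrow$) trivial as above; ($\Leftarrow$) from $E(R')=0$ deduce $R'=c\,\pi_1$ for a function $c$, then use the explicit form of $D$ (equivalently \eqref{29}–\eqref{30} combined with the structure of $R$ on this class) to force $c=0$, and finally invoke \thmref{thm-6.1} for the bracketed equivalence with $E(R)=0$.
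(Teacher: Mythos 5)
Your forward direction is fine, and you have correctly located the crux of the converse: $E(R')=0$ only says $R'=c\,\pi_1$ with $c=\tau'/(2n(2n-1))$, and the entire content of the theorem is that this $c$ must vanish. But you do not actually close that gap. You conjecture that ``a Bianchi-type or $P$-compatibility constraint'' should be incompatible with $R'=c\,\pi_1$ for $c\neq0$, and then propose to verify this by computing $R'$ explicitly from the deformation tensor $Q$ of $D$. That computation is never carried out, and the hedging (``should yield $c=0$'') leaves the essential step unproved. As written, the converse is incomplete.

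The missing ingredient is already available in Section~2 of the paper and requires no computation of $R'$: since $\ta$ is closed, the curvature tensor $R'$ of \emph{any} natural connection on $(M,P,g)\in\overline{\W}_3\cup\overline{\W}_6$ is a Riemannian $P$-tensor, i.e. $R'(x,y,Pz,Pw)=R'(x,y,z,w)$. Substituting $Pz$, $Pw$ for $z$, $w$ in $R'=\frac{\tau'}{2n(2n-1)}\pi_1$ and using $\pi_1(x,y,Pz,Pw)=\pi_2(x,y,z,w)$ (immediate from \eqref{2} and \eqref{3}) yields $R'=\frac{\tau'}{2n(2n-1)}\pi_2$ as well; subtracting gives $\tau'(\pi_1-\pi_2)=0$, and since $\pi_1\neq\pi_2$ this forces $\tau'=0$, hence $R'=0$. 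This is exactly the paper's argument. Two smaller remarks: your attribution of the decisive constraint to the \emph{specific} torsion of $D$ (the case $\lm=\mu=0$) is misplaced --- the $P$-tensor property of $R'$ holds for every natural connection here and is what does the work; and your detours through $\rho(\pi_1)$ and the constant-sectional-curvature form of $R$ are correct but contribute nothing to the proof.
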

\begin{proof}
Let $E(R')=0$ be valid, i.e.
\begin{equation}\label{32}
    R'(x,y,z,w)=\frac{\tau'}{2n(2n-1)}\pi_1(x,y,z,w).
\end{equation}
In \eqref{32}, we substitute $Pz$ and $Pw$ for $z$ and $w$,
respectively. Taking into account that $R'$ is a Riemannian
$P$-tensor and $\pi_1(x,y,Pz,Pw)=\pi_2(x,y,z,w)$, we obtain
\begin{equation}\label{33}
    R'=\frac{\tau'}{2n(2n-1)}\pi_2.
\end{equation}
From \eqref{32} and \eqref{33} it is follows $\tau'(\pi_1-
\pi_2)=0$ and because of $\pi_1\neq \pi_2$ we have $\tau'=0$. Then
$R'=0$, according to \eqref{32}, i.e. $D$ is a flat connection.

Vice versa, let $D$ be flat, i.e. $R'=0$. Then $\tau'=0$ and
bearing in mind the definition of $E(R')$ we obtain $E(R'=0$.
\end{proof}

\begin{cor}\label{cor-6.3}
Let the manifold $(M,P,g)\in\overline{\W}_3\cup\overline{\W}_6$ be
with a closed 1-form $\ta$ and $D$ be flat with a parallel
torsion. Then $(M,P,g)$ is a space form with a negative scalar
curvature $\tau$.
\end{cor}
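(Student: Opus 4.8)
The plan is to combine the flatness of $D$ with the structural equation \eqref{29} and then invoke the classification of constant-curvature Riemannian almost product manifolds used in \cite{6}. First I would set $R'=0$ in \eqref{29}; since $D$ is flat, \thmref{thm-6.2} applies, but here we use \eqref{29} directly: it becomes
\[
R=-\frac{\ta(\Om)}{4n^2}\,\pi_1.
\]
Since $g$ is Riemannian and positive-definite, $\pi_1=\frac12\psi_1(g)$ is (up to sign and normalisation) the curvature tensor of the standard sphere/hyperbolic model, so this equation says that $(M,g)$ has constant sectional curvature $c=-\frac{\ta(\Om)}{8n^2}$, i.e. $(M,P,g)$ is a space form. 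Moreover $\ta(\Om)=g(\Om,\Om)\ge 0$, and in fact $\ta(\Om)>0$: if $\ta(\Om)=0$ then $\Om=0$, hence $\ta=0$, hence $F=0$ by the characteristic conditions of $\overline\W_3\cup\overline\W_6$, so $\n P=0$ and $R=K$ would itself be a Riemannian $P$-tensor while also being flat — but then $D=\n$ up to the torsion term, and one checks this degenerate case is excluded by hypothesis (the manifold is genuinely in $\overline\W_3\cup\overline\W_6$, not in $\W_0$). Hence $c=-\frac{\ta(\Om)}{8n^2}<0$ and the scalar curvature $\tau=2n(2n-1)c<0$ is negative.

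The key steps, in order, are: (i) quote \eqref{29} and substitute $R'=0$; (ii) read off that $R$ is a constant multiple of $\pi_1$, hence $(M,P,g)$ is a space form in the sense of \cite{6}; (iii) compute the constant and its sign, using $\ta(\Om)=\|\Om\|^2\ge 0$ together with the fact that $\Om\ne 0$ on a manifold genuinely of class $\overline\W_3\cup\overline\W_6$, so that the scalar curvature $\tau$ computed from \eqref{30} (with $\tau'=0$) equals $-\frac{(2n-1)\ta(\Om)}{2n}<0$.

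The main obstacle I anticipate is justifying strictly that $\ta(\Om)\ne 0$, i.e. that the degenerate sub-case $\Om=0$ cannot occur here. One must argue that $\Om=0$ forces the manifold into the Riemannian $P$-manifold class $\W_0=\W_1\cap(\text{trivial }F)$, which lies outside $\overline\W_3\cup\overline\W_6$ as these are understood in the paper (they are the two proper summands of $\W_1$ with $F\ne 0$); alternatively, if $\Om=0$ then $\n'=D=\n$, so flatness of $D$ would mean the Levi-Civita connection of $g$ is flat while $R=K$, and consistency with the standing hypotheses of the corollary rules this out. This is the only non-formal point; everything else is a direct reading of \eqref{29}–\eqref{30} and the space-form characterisation of \cite{6}.

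\begin{proof}
Since $D$ is flat, $R'=0$, so \eqref{29} gives
\[
R=-\frac{\ta(\Om)}{4n^2}\,\pi_1.
\]
As $\pi_1=\frac12\psi_1(g)$ and $g$ is positive-definite, this means that $(M,P,g)$ has constant sectional curvature, i.e. it is a space form in the sense of \cite{6}. From \eqref{30} with $\tau'=0$ we get
\[
\tau=-\frac{(2n-1)\ta(\Om)}{2n}.
\]
Now $\ta(\Om)=g(\Om,\Om)\ge 0$, and $\ta(\Om)\ne 0$: otherwise $\Om=0$, hence $\ta=0$, and then the characteristic equations of $\overline\W_3$ and $\overline\W_6$ force $F=0$, contradicting that $(M,P,g)$ belongs to $\overline\W_3\cup\overline\W_6$. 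Therefore $\ta(\Om)>0$ and $\tau<0$.
\end{proof}
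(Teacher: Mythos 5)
Your proof is correct and follows essentially the same route as the paper: substitute $R'=0$ into \eqref{29} (the paper phrases this step via \thmref{thm-6.2}, i.e.\ $E(R)=0$, which yields the same identity $R=-\frac{\ta(\Om)}{4n^2}\pi_1$) and then read off the sign of $\tau$ from \eqref{30}. Your extra argument that $\ta(\Om)>0$ --- ruling out $\Om=0$ via the defining conditions of $\overline\W_3$ and $\overline\W_6$ --- addresses a point the paper simply asserts without justification, so your version is, if anything, slightly more careful.
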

\begin{proof}
If $D$ is flat, then by \thmref{thm-6.2} we have $E(R)=0$, i.e.
\[
R=\frac{\tau}{2n(2n-1)}\pi_1.
\]
This means that the manifold is a
space form. Moreover, $\tau'=0$ for a flat connection $D$ and
therefore $\tau=-\frac{2n-1}{2n}\ta(\Om)$, because of \eqref{30}.
Thus, since $\ta(\Om)=g(\Om,\Om)>0$, we obtain $\tau<0$.
\end{proof}

\bigskip

\small{ \noindent
\textsl{D. Gribacheva\\
Department of Algebra and Geometry\\
Faculty of Mathematics and Informatics\\
University of Plovdiv\\
236 Bulgaria Blvd\\
4003 Plovdiv, Bulgaria}
\\
\texttt{dobrinka@uni-plovdiv.bg} }

\end{document}